\DeclareRobustCommand\particle{\raisebox{1.3pt}{\tikz{\draw[-,black,dotted,line width = 0.9pt](0,0) -- (5mm,0);}}}
\DeclareRobustCommand\approximation{\raisebox{1.3pt}{\tikz{\draw[-,red,solid,line width = 0.9pt](0,0) -- (5mm,0);}}}
\DeclareRobustCommand\ole{\raisebox{1.3pt}{\tikz{\draw[-,blue,dashed,line width = 0.9pt](0,0) -- (5mm,0);}}}
\DeclareRobustCommand\map{\raisebox{0pt}{\tikz{\draw[-,black!30!green,solid,line width = 0.9pt](0,0) -- (5mm,0);\node[circle,black!30!green,line width = 0.9pt,minimum size=1.3mm,draw,inner sep=0pt] at (.25,0) {};}}}
\newtheorem{problem}{Problem}
\newtheorem{lemma}{Lemma}
\newtheorem{theorem}{Theorem}
\newtheorem{corollary}{Corollary}
\newcommand{\diag}{\mathop{\text{diag}}}
\newcommand{\argmin}{\mathop{\text{argmin}}}
\newcommand{\trace}{\mathrm{Tr}}
\title{Optimal State Estimation with Measurements Corrupted by Laplace Noise\thanks{The work of F. Farokhi was supported by a McKenzie Fellowship, ARC grant LP130100605, an early career grant from the Melbourne School of Engineering.}\thanks{The work of H. Sandberg and J. Milosevic was supported by the Swedish Civil Contingencies Agency through the CERCES project.}\thanks{The authors would like to thank Cristian R. Rojas for discussions.}}
\author{Farhad Farokhi\thanks{F. Farokhi is with the Department of Electrical and Electronic Engineering, the University Melbourne, Parkville, VIC 3010, Australia. Email:ffarokhi@unimelb.edu.au}, Jezdimir Milosevic, Henrik Sandberg\thanks{H. Sandberg and  J. Milosevic are with the Department of Automatic Control, KTH Royal Institute of Technology, Stockholm, Sweden. Emails:\{jezdimir,hsan\}@kth.se}\vspace{-.1in}}
\begin{document}
\maketitle

\begin{abstract} Optimal state estimation for linear discrete-time systems is considered. Motivated by the literature on differential privacy, the measurements are assumed to be corrupted by Laplace noise. The optimal least mean square error estimate of the state is approximated using a randomized method. The method relies on that the Laplace noise can be rewritten as Gaussian noise scaled by Rayleigh random variable. The probability of the event that the distance between the approximation and the best estimate is smaller than a constant is determined as function of the number of parallel Kalman filters that is used in the randomized method. This estimator is then compared with the optimal linear estimator, the maximum \textit{a posteriori} (MAP) estimate of the state, and the particle filter. 
\end{abstract}

\section{Introduction} \label{sec:intro}
Differential privacy provides a systematic approach for responding to statistical queries on stochastic databases while preserving the privacy of the individuals~\cite{Dwork2008}. Most often the outcome of the query is corrupted by Laplace noise where its scaling parameter is proportional to the sensitivity of the outcome to individual changes in the entries of the database. This ensures that the changes in the entries do not create pronounced variations in the response (in terms of the probability of observing various responses) and thus reverse engineering techniques cannot be used. More recently, the notion of differential privacy has been extended to dynamical systems~\cite{dwork2010differential,le2014differentially, huang2012differentially,huang2014cost}. In~\cite{le2014differentially}, the notion of differential privacy was extended to dynamical systems to preserve the privacy of the inputs. Iterative consensus seeking algorithms that preserve the privacy of the initial conditions of the participants were considered in~\cite{huang2012differentially}. Corrupting the state measurements of agents in a distributed control setup was explored in~\cite{huang2014cost} to keep the preferences of the agents (i.e., the way points that need to be followed) private. All the mentioned studies propose adding Laplace noise to the transmitted measurements or the system dynamics. Therefore, for controlling these systems, optimal state estimation techniques are required that can handle scenarios where the process noise and/or the measurement noise follow the Laplace distribution. 

In this paper, specifically, optimal state estimation for linear discrete-time systems when the measurements are corrupted by the Laplace noise is considered. The optimal least mean square error estimate of the state (which is equal to the conditional mean of the state given the output measurements) is approximated using a randomized algorithm. To do so, the Laplace noise is written as Gaussian noise scaled by Rayleigh random variable. Then, several time-varying Kalman filters are run in parallel to generate the best estimate for various choices of Rayleigh scales (that are selected randomly). The outcomes of the Kalman filters are averaged to construct an estimate. The probability of the event that the distance between the approximation and the best estimate is bounded by a constant is derived. This probability is a function of the number of the parallel-running Kalman filters and the error bound. Therefore, the performance of this filter can be arbitrarily improved at the expense of the computational expense of running several Kalman filters. The performance of this filter is compared with the optimal linear estimator, the maximum \textit{a posteriori} (MAP) estimate of the state, and the particle filter. 

Non-Gaussian estimation problems have been studied extensively in the past~\cite{gordon1993novel,arulampalam2002tutorial}. Most often particle filters, which belong to the family of randomized methods, are utilized to approximate the conditional density function of the state given the measurements. Subsequently, the best estimate is constructed using this conditional density function. However, to the best of our knowledge, the special structure of the Laplace random variables, i.e., the fact that a Laplace noise can be written as a Gaussian noise scaled by a Rayleigh random variable, has not been utilized to develop more efficient filters. This avenue is explored in this paper.

The MAP estimator in this paper is similar, in essence, to least absolute shrinkage and selection operator (LASSO), which is a regression analysis method that performs both variable selection and regularization at the same time~\cite{tibshirani1996regression}. Note that this approach was utilized earlier in~\cite{carrillo2015state} to state estimation with impulsive noises. The impulsive noise was modelled using Laplace noise and the same filter was developed. The static version of the filter for measurements corrupted by Laplace noise was more recently used in~\cite{7402921}. 

The rest of the paper is organized as follows. First, this section is concluded by presenting  notations.  The problem formulation is introduced in Section~\ref{sec:problem}. A randomized algorithm is developed in Section~\ref{sec:randomized} to approximate the least mean square error estimate of the state. The optimal linear estimators are designed in Section~\ref{sec:linear}. Section~\ref{sec:MLE} constructs the MAP estimate of the state. The numerical examples are presented in Section~\ref{sec:numerical} and Section~\ref{sec:conc} concludes the paper.

\subsection{Notation}
The sets of natural, integer, and real numbers are respectively denoted by $\mathbb{N}$, $\mathbb{Z}$, and $\mathbb{R}$. Let $\mathbb{N}_0=\mathbb{N}\cup\{0\}$. The set of non-negative real numbers is denoted by $\mathbb{R}_{\geq 0}=\{x\in\mathbb{R}\,|\,x\geq 0\}$. The set of positive definite matrices in $\mathbb{R}^{n\times n}$ is denoted by $\mathcal{S}_{+}^n$.

A random variable $w\in\mathbb{R}^n$, for some $n\in\mathbb{N}$, is said to follow the Gaussian distribution with mean $\mu\in\mathbb{R}^n$ and variance $W\in\mathcal{S}_{+}^n$, or equivalently $x\sim \mathcal{N}(\mu,W)$, if 
\begin{align*}
\mathbb{P}\{w\in\mathcal{W}\}=\int_{w'\in\mathcal{W}} &\bigg[\frac{1}{\sqrt{(2\pi)^n\det(W)}}\\&\hspace{-.36in}\times \exp\bigg(-\frac{1}{2}(x-\mu)^\top W^{-1}(x-\mu) \bigg) \bigg]\mathrm{d}w'
\end{align*}
for any Lebesgue-measurable set $\mathcal{W}\subseteq \mathbb{R}^n$. A scalar random variable $v\in\mathbb{R}$ is said to follow the Laplace distribution with mean $a\in\mathbb{R}$ and scaling parameter $b\in\mathbb{R}_{\geq 0}$, or equivalently $v\sim \mathcal{L}(a,b)$, if 
\begin{align*}
\mathbb{P}\{v\in\mathcal{V}\}=\int_{v'\in\mathcal{V}}\bigg[ \frac{1}{2b}\exp\left(-\frac{|v'-a|}{b}\right)\bigg]\mathrm{d}v'
\end{align*}
for any Lebesgue-measurable set $\mathcal{V}\subseteq\mathbb{R}$. Evidently, if $v\sim\mathcal{L}(0,b)$, then $\mathbb{E}\{v^2\}=2b^2$. A scalar random variable $x$ follows the Rayleigh distribution with scale parameter $\theta$, or equivalently $x\sim \mathcal{R}(\theta)$, if 
\begin{align*}
\mathbb{P}\{x\in\mathcal{X}\}=\int_{x'\in\mathcal{X}} \frac{x'}{\theta^2}\exp\left(\frac{-x'^2}{2\theta^2}\right)\mathrm{d}x'
\end{align*}
for any Lebesgue-measurable set $\mathcal{X}\subseteq\mathbb{R}_{\geq 0}$.

\section{Problem Formulation} \label{sec:problem}
Consider the discrete-time linear dynamical system 
\begin{subequations}\label{eqn:dynamics}
\begin{align}
x[k+1]&=A[k]x[k]+w[k],\,x[0]=x_0,\\
y[k]&=C[k]x[k]+v[k]
\end{align}
\end{subequations}
where $x[k]\in\mathbb{R}^n$ is the state, $w[k]\in\mathbb{R}^n$ is the process noise, $y[k]\in\mathbb{R}^p$ is the output, and $v[k]\in\mathbb{R}^p$ is the measurement noise. Assume that $(w[k])_{k\in\mathbb{N}_0}$ is a sequence of i.i.d.\footnote{i.i.d. stands for identically and independently distributed.} zero-mean Gaussian random variables with variance $W[k]$ at time $k\in\mathbb{N}_0$. Equivalently, $w[k]\sim\mathcal{N}(0,W[k])$ for all $k\in\mathbb{N}_0$. Motivated by the setup of differential privacy in which the measurements are often corrupted by Laplace noise to preserve the privacy of the individuals (see Section~\ref{sec:intro}), $(v[k])_{k\in\mathbb{N}_0}$ is assumed to be a sequence of i.i.d. Laplace random variables with zero mean and variance $V[k]$ at time $k\in\mathbb{N}_0$. Note that $v_i[k]$ is assumed to be statistically independent of $v_j[k]$ if $i\neq j$. Thus, $v_i[k]\sim \mathcal{L}(0,\sqrt{V_{ii}[k]/2})$. Assume that $\mathbb{E}\{x_0\}=0$ and $\mathbb{E}\{x_0x_0^\top\}=X_0\in\mathcal{S}_+^n$. The assumption that $\mathbb{E}\{x_0\}=0$ can always be satisfied by a simple change of variable and is thus without the loss of generality. The following problem is addressed in this paper.

\begin{problem} \label{prob:1} Find the state estimate $(\hat{x}[k])_{k\in\mathbb{N}_0}$ such that (\textit{i})~$\mathbb{E}\{\hat{x}[k]\}=\mathbb{E}\{x[k]\}=0$ and (\textit{ii})~the variance of error $\mathbb{E}\{\|x-\hat{x}\|_2^2\}=\trace(\mathbb{E}\{(x[k]-\hat{x}[k])(x[k]-\hat{x}[k])^\top\})$ is minimized. 
\end{problem}

The solution of this problem is the conditional exception $\mathbb{E}\{x[k]\,|\,y[0],\dots,y[k]\}$, which can be realized using a finite-order linear system in the case where the measurement noise has a Gaussian distribution. This system is the celebrated Kalman filter. However, for more general distributions, the least mean square estimate of the state $\mathbb{E}\{x[k]\,|\,y[0],\dots,y[k]\}$ is, in general, not  realizable by finite-order linear systems. In this paper, the state estimate is approximated using a randomized method and its performance is compared with that of an optimal linear estimators and the MAP estimator. Note that MAP is not necessarily an unbiased filter (which is a criteria of Problem~\ref{prob:1}).

\section{Approximating the Optimal Estiamte} \label{sec:randomized}
In this section, a randomized method is proposed to generate a state estimate that is close to the optimal least mean square error estimate of the  state, which is equal to $\mathbb{E}\{x[k]\,|\,(y[t])_{t=0}^k\}$. This filter uses the observation that  the Laplace noise can be replaced with an appropriately scaled Gaussian noise. 

\begin{lemma} For $b>0$, let $\tau\sim\mathcal{R}(b)$ and $v\sim\mathcal{N}(0,\tau^2)$. Then, $v\sim\mathcal{L}(0,b)$. 
\end{lemma}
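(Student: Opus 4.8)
The plan is to compute the unconditional distribution of $v$ and match it with the Laplace density. I would work with characteristic functions, since this reduces the problem to an elementary integral. By the tower property, $\phi_v(\omega):=\mathbb{E}\{e^{\mathrm{i}\omega v}\}=\mathbb{E}\{\mathbb{E}\{e^{\mathrm{i}\omega v}\,|\,\tau\}\}$; since $v\,|\,\tau\sim\mathcal{N}(0,\tau^2)$, the inner expectation is the Gaussian characteristic function $e^{-\tau^2\omega^2/2}$, so that $\phi_v(\omega)=\mathbb{E}\{e^{-\tau^2\omega^2/2}\}$ with $\tau\sim\mathcal{R}(b)$.

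Next I would evaluate this expectation against the Rayleigh density $\tau b^{-2}e^{-\tau^2/(2b^2)}$. Substituting $s=\tau^2$ (so that $\tau\,\mathrm{d}\tau=\mathrm{d}s/2$) turns the Rayleigh weight into $(2b^2)^{-1}e^{-s/(2b^2)}\,\mathrm{d}s$, and the computation collapses to $\phi_v(\omega)=\frac{1}{2b^2}\int_0^\infty e^{-(\omega^2/2+1/(2b^2))s}\,\mathrm{d}s=\frac{1}{2b^2}\cdot\frac{1}{\omega^2/2+1/(2b^2)}=\frac{1}{1+b^2\omega^2}$. This is precisely the characteristic function of $\mathcal{L}(0,b)$ (equivalently, one may invert $1/(1+b^2\omega^2)$ by a partial-fraction/residue computation to recover the density $\frac{1}{2b}e^{-|x|/b}$ on $\mathbb{R}$). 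Since the characteristic function determines the distribution uniquely, this yields $v\sim\mathcal{L}(0,b)$.

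An alternative, slightly more laborious route works entirely with densities: the law of total probability gives $f_v(x)=\int_0^\infty \frac{1}{\sqrt{2\pi}\,\tau}e^{-x^2/(2\tau^2)}\cdot\frac{\tau}{b^2}e^{-\tau^2/(2b^2)}\,\mathrm{d}\tau$, and after the same $s=\tau^2$ substitution one is left with $\frac{1}{2b^2\sqrt{2\pi}}\int_0^\infty s^{-1/2}\exp(-x^2/(2s)-s/(2b^2))\,\mathrm{d}s$. The main obstacle in this version is the evaluation of the integral $\int_0^\infty s^{-1/2}e^{-a/s-cs}\,\mathrm{d}s=\sqrt{\pi/c}\,e^{-2\sqrt{ac}}$; I would establish it by rescaling $s$ to symmetrize the exponent and then using the substitution $u=\sqrt{t}-1/\sqrt{t}$ (which satisfies $t+1/t=u^2+2$ and maps $(0,\infty)$ onto $\mathbb{R}$), reducing everything to a Gaussian integral. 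With $a=x^2/2$ and $c=1/(2b^2)$ this again gives $f_v(x)=\frac{1}{2b}e^{-|x|/b}$. In the characteristic-function route the analogous integral is the trivial $\int_0^\infty e^{-\alpha s}\,\mathrm{d}s=1/\alpha$, which is why I would present that argument as the primary one and keep the density computation as a remark.
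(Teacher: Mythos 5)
Your proof is correct, but your primary argument is a genuinely different route from the paper's. The paper works directly with densities: it writes $\mathbb{P}\{v\in\mathcal{V}\}$ as a double integral of the conditional Gaussian density against the Rayleigh density and evaluates the inner integral $\int_0^{\infty}\exp\left(-v'^2/(2\tau'^2)-\tau'^2/(2b^2)\right)\mathrm{d}\tau'$ by citing a table formula (Lebedev, eq.\ (5.10.10)), which produces the Laplace density $\frac{1}{2b}e^{-|v'|/b}$ in one step. You instead pass to characteristic functions: conditioning on $\tau$ gives $\phi_v(\omega)=\mathbb{E}\{e^{-\tau^2\omega^2/2}\}$, and the substitution $s=\tau^2$ turns the Rayleigh law into an exponential law, so the computation reduces to $\int_0^\infty e^{-\alpha s}\,\mathrm{d}s$ and yields $1/(1+b^2\omega^2)$, the Laplace characteristic function; uniqueness of characteristic functions finishes the argument. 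What your route buys is that the only integral needed is elementary, at the price of invoking the uniqueness theorem for characteristic functions and of recognizing $1/(1+b^2\omega^2)$ as the Laplace transform pair (or inverting it by residues). What the paper's route buys is a direct identification of the density with no appeal to transform uniqueness, at the price of a nontrivial Bessel-type integral that it outsources to a reference. Your secondary, density-based remark is essentially the paper's proof, except that you sketch a self-contained evaluation of $\int_0^\infty s^{-1/2}e^{-a/s-cs}\,\mathrm{d}s=\sqrt{\pi/c}\,e^{-2\sqrt{ac}}$ via the symmetrizing substitution rather than citing it; that evaluation is standard and correct (one small care: with $u=\sqrt{t}-1/\sqrt{t}$ you should first symmetrize the measure by averaging the $t^{-1/2}$ and $t^{-3/2}$ weights, using the invariance of the integral under $t\mapsto 1/t$, so that $\mathrm{d}u$ matches the integrand exactly). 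Both of your arguments are valid.
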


\begin{proof}  Note that, Lebesgue-measurable set $\mathcal{V}\subseteq\mathbb{R}$, we get
\begin{align}
\mathbb{P}\{v\in\mathcal{V}\}=\hspace{-.04in}
&\int_{v'\in\mathcal{V}}\int_{\tau'=0}^{+\infty} \frac{1}{\sqrt{2\pi}\tau'}\exp\hspace{-.04in}\left(\frac{-v'^2}{2\tau'^2} \right)\hspace{-.04in}\nonumber\\
&\hspace{.8in}\times\frac{\tau'}{b^2}\exp\hspace{-.04in}\left(\frac{-\tau'^2}{2b^2}\right)\hspace{-.04in}\mathrm{d}\tau'\mathrm{d}v'\nonumber\\=\hspace{-.04in}
&\int_{v'\in\mathcal{V}}\int_{\tau'=0}^{+\infty} \frac{1}{\sqrt{2\pi}b^2}\exp\left(\hspace{-.04in}-\frac{v'^2}{2\tau'^2}\hspace{-.02in}-\hspace{-.02in}\frac{\tau'^2}{2b^2}\hspace{-.04in}\right)\hspace{-.04in}\mathrm{d}\tau'\mathrm{d}v'\nonumber\\=\hspace{-.04in}
&\int_{v'\in\mathcal{V}} \frac{1}{2b} \exp\left(-2\sqrt{\frac{v'^2}{4b^2}}\right)\mathrm{d}v'\label{eqn:proof:equality}
\\=\hspace{-.04in}
&\int_{v'\in\mathcal{V}} \frac{1}{2b} \exp\left(-\frac{|v'|}{b}\right)\mathrm{d}v', \nonumber
\end{align}
where~\eqref{eqn:proof:equality} follows from (5.10.10) in~\cite[p.\,118]{lebedev1972special}.
This shows that $v\sim\mathcal{L}(0,b)$. 
\end{proof}

Following this observation, the output equation of the underlying system can be redefined to be
\begin{align*}
y[k]=Cx[k]+\bar{v}[k],
\end{align*}
where $(\bar{v}[k])_{k\in\mathbb{N}_0}$ denotes  a sequence of i.i.d. zero-mean Gaussian random variables with variance $T[k]=\diag(\tau_{1}^2[k],\dots,\tau_{p}^2[k])$ where $\tau_{i}[k]\sim\mathcal{R}(\sqrt{V_{ii}[k]/2})$ for all $1\leq i\leq p$. Now, note that
\begin{align*}
\hat{x}[k]
&=\mathbb{E}\{x[k]\,|\,(y[t])_{t=0}^k\}\\
&=\mathbb{E}\{\mathbb{E}\{x[k]\,|\,(y[t])_{t=0}^k,(T[t])_{t=0}^k\}\,|\,(y[t])_{t=0}^k\}.
\end{align*}
The estimates {$\mathbb{E}\{x[k]|(y[t])_{t=0}^k,(T[t])_{t=0}^k\}$ are easy to compute. In fact, a time-varying Kalman filter can be used to calculate $\mathbb{E}\{x[k]|(y[t])_{t=0}^k,(T[t])_{t=0}^k\}$ because, for a fixed sequence of matrices $(T[t])_{t=0}^k$, the problem can be translated into optimal filtering of a discrete-time linear time-varying system with Gaussian process and observation noises. For all $0\leq \ell\leq k$, define
\begin{align*}
\tilde{P}[\ell]&=\mathbb{E}\{(x[\ell]-\mathbb{E}\{x[\ell]|(y[t])_{t=0}^\ell,(T[t])_{t=0}^\ell\})\\
&\times(x[\ell]-\mathbb{E}\{x[\ell]|(y[t])_{t=0}^\ell,(T[t])_{t=0}^\ell\})^\top\,|\,(T[t])_{t=0}^\ell\}.
\end{align*}
Following the Kalman filter construction results in
\begin{align*}
\mathbb{E}\{x[\ell]|&(y[t])_{t=0}^{\ell-1},(T[t])_{t=0}^{\ell-1}\}\\&=A[\ell]\mathbb{E}\{x[\ell-1]|(y[t])_{t=0}^{\ell-1},(T[t])_{t=0}^{\ell-1}\},
\end{align*}
and
\begin{align*}
\mathbb{E}\{x[\ell]|&(y[t])_{t=0}^\ell,(T[t])_{t=0}^\ell\}=\mathbb{E}\{x[\ell]|(y[t])_{t=0}^{\ell-1},(T[t])_{t=0}^{\ell-1}\}\\
&+L[\ell](y[\ell]-C[\ell]\mathbb{E}\{x[\ell]|(y[t])_{t=0}^{\ell-1},(T[t])_{t=0}^{\ell-1}\}),
\end{align*}
where
\begin{align*}
L[\ell]=\hat{P}[\ell]C[\ell]^\top(C[\ell]\hat{P}[\ell]C[\ell]^\top+T[\ell])^{-1},
\end{align*}
and
\begin{align*}
\hat{P}[\ell]=
&\mathbb{E}\{(x[\ell]-\mathbb{E}\{x[\ell]|(y[t])_{t=0}^{\ell-1},(T[t])_{t=0}^{\ell-1}\})\\
&\hspace{-.05in}\times(x[\ell]-\mathbb{E}\{x[\ell]|(y[t])_{t=0}^{\ell-1},(T[t])_{t=0}^{\ell-1}\})^\top\,|\,(T[t])_{t=0}^{\ell}\}\\
=&A[\ell]^\top \tilde{P}[\ell-1]A[\ell]+W[\ell].
\end{align*}
It can also be observed that
\begin{align*}
\tilde{P}[\ell]
=&(I-L[\ell]C[\ell])\hat{P}[\ell]\\
=&\hat{P}[\ell]-\hat{P}[\ell]C[\ell]^\top(C[\ell]\hat{P}[\ell]C[\ell]^\top+T[\ell])^{-1}C[\ell]\hat{P}[\ell].
\end{align*}
The filter is initialized by 
$\mathbb{E}\{x[0]|(y[t])_{t=0}^{-1},(T[t])_{t=0}^{-1}\}=\mathbb{E}\{x[0]\}=\mathbb{E}\{x_0\}=0,$
and
\begin{align*}
\hat{P}[0]=&\mathbb{E}\{(x[0]-\mathbb{E}\{x[0]|(y[t])_{t=0}^{-1},(T[t])_{t=0}^{-1}\})\\
&\hspace{-.05in}\times(x[0]-\mathbb{E}\{x[0]|(y[t])_{t=0}^{-1},(T[t])_{t=0}^{-1}\})^\top\,|\,(T[t])_{t=0}^{-1}\}\\
=&\mathbb{E}\{x_0x_0^\top\}=X_0.
\end{align*}
Notice that, in this derivation, both $\tilde{P}[k]$ and $\hat{P}[k]$ are random variables depending on the random variable $(T[t])_{t=0}^{k}$. 

Now, at each iteration $k$, we may select samples $T^i[k]$, $1\leq i\leq I$, from the conditional density $p(T[k]|(y[t])_{t=0}^k)$. Following the recipe above, $\mathbb{E}\{x[k]|(y[t])_{t=0}^k,(T^i[t])_{t=0}^k\}$ can be computed easily. Define
\begin{align*}
\hat{x}^{\mathrm{appx}}[k]=\frac{1}{I}\sum_{i=1}^I \mathbb{E}\{x[k]|(y[t])_{t=0}^k,(T^i[t])_{t=0}^k\}.
\end{align*}
The difficulty of this algorithm is to generate samples from $p(T[k]|(y[t])_{t=0}^k)$. This is discussed in detail towards the end of this section. However, first, it is shown that this randomized algorithm can generate arbitrarily close approximations of the least mean square error estimate of the state.

\begin{theorem} \label{tho:3} There exists $M[k]>0$ such that $\mathbb{P}\{\|\hat{x}[k]-\hat{x}^{\mathrm{appx}}[k]\|_2\leq \epsilon\}\geq 1-\delta$ for $I=M[k]\delta^{-1}\epsilon^{-2}$.
\end{theorem}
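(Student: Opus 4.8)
The plan is to recognize $\hat{x}^{\mathrm{appx}}[k]$ as a Monte--Carlo average of conditionally i.i.d.\ random vectors whose conditional mean, given the data, is exactly $\hat{x}[k]$, and then to apply Markov's inequality. First I would fix the observed trajectory $y:=(y[t])_{t=0}^k$ and set $g^i:=\mathbb{E}\{x[k]\,|\,y,(T^i[t])_{t=0}^k\}$ for $i=1,\dots,I$. Since the trajectories $(T^i[t])_{t=0}^k$ are drawn independently from $p((T[t])_{t=0}^k\,|\,y)$, the vectors $g^1,\dots,g^I$ are i.i.d.\ conditionally on $y$, and the tower property gives $\mathbb{E}\{g^i\,|\,y\}=\mathbb{E}\{\mathbb{E}\{x[k]\,|\,y,(T[t])_{t=0}^k\}\,|\,y\}=\mathbb{E}\{x[k]\,|\,y\}=\hat{x}[k]$. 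Hence $\mathbb{E}\{\hat{x}^{\mathrm{appx}}[k]\,|\,y\}=\hat{x}[k]$ and, by conditional independence,
\[
\mathbb{E}\{\|\hat{x}^{\mathrm{appx}}[k]-\hat{x}[k]\|_2^2\,|\,y\}=\frac{1}{I}\,\trace\big(\Sigma(y)\big),\qquad \Sigma(y):=\mathbb{E}\{(g^1-\hat{x}[k])(g^1-\hat{x}[k])^\top\,|\,y\}.
\]

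The crux is the second step: producing a single constant $M[k]$, independent of $I$, $\epsilon$ and $\delta$, that dominates $\mathbb{E}\{\trace(\Sigma(y))\}$. I would obtain it by applying the law of total variance twice. Conditionally on $y$, with ``randomness'' $(T[t])_{t=0}^k$, one has $\mathbb{E}\{(x[k]-\hat{x}[k])(x[k]-\hat{x}[k])^\top\,|\,y\}=\Sigma(y)+\mathbb{E}\{\tilde{P}[k]\,|\,y\}$, so $\Sigma(y)\preceq \mathbb{E}\{(x[k]-\hat{x}[k])(x[k]-\hat{x}[k])^\top\,|\,y\}$; taking expectation over $y$ and using the law of total variance once more, $\mathbb{E}\{\Sigma(y)\}\preceq \mathbb{E}\{x[k]x[k]^\top\}=:X[k]$, where $X[k]$ is the a priori state covariance generated by $X[\ell+1]=A[\ell]X[\ell]A[\ell]^\top+W[\ell]$ with $X[0]=X_0$, which is a well-defined positive definite matrix for every finite $k$. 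Setting $M[k]:=\trace(X[k])<\infty$ then yields $\mathbb{E}\{\|\hat{x}^{\mathrm{appx}}[k]-\hat{x}[k]\|_2^2\}=\tfrac{1}{I}\mathbb{E}\{\trace(\Sigma(y))\}\leq M[k]/I$.

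Finally I would invoke Markov's inequality on the nonnegative random variable $\|\hat{x}^{\mathrm{appx}}[k]-\hat{x}[k]\|_2^2$:
\[
\mathbb{P}\{\|\hat{x}^{\mathrm{appx}}[k]-\hat{x}[k]\|_2>\epsilon\}=\mathbb{P}\{\|\hat{x}^{\mathrm{appx}}[k]-\hat{x}[k]\|_2^2>\epsilon^2\}\leq \frac{M[k]}{I\epsilon^2},
\]
and the right-hand side equals $\delta$ when $I=M[k]\delta^{-1}\epsilon^{-2}$ (read as $I\geq\lceil M[k]\delta^{-1}\epsilon^{-2}\rceil$ to make it a genuine sample size), giving $\mathbb{P}\{\|\hat{x}^{\mathrm{appx}}[k]-\hat{x}[k]\|_2\leq\epsilon\}\geq 1-\delta$.

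The main obstacle is the second step --- certifying that the data-dependent posterior spread $\mathbb{E}\{\trace(\Sigma(y))\}$ is finite and bounded by an explicit, $I$-independent quantity; the two nested applications of ``conditioning cannot increase variance on average'' are what make this work, and one must be careful that $\Sigma(y)$ is the conditional covariance of the Kalman mean $g^1$, not of $x[k]$ itself, and that the Kalman term $\mathbb{E}\{\tilde{P}[k]\,|\,y\}$ is discarded on the correct side of the inequality. The first and third steps are essentially bookkeeping (tower property, $1/I$ variance scaling of an i.i.d.\ average, Markov's inequality), with the only mild caveat being the ceiling needed so that $I$ is an integer.
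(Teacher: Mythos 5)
Your proposal is correct and follows essentially the same route as the paper: condition on the data, use the conditional i.i.d.\ structure of the $T^i$ samples to get the $1/I$ variance scaling, and apply Markov/Chebyshev to the squared norm with $M[k]$ equal to the expected squared deviation of a single Kalman mean from $\hat{x}[k]$. The one genuine improvement is your double application of the law of total variance to certify $M[k]\leq\trace(X[k])<\infty$ with $X[k]$ the a priori state covariance --- the paper merely defines $M[k]$ as an expectation without proving it finite for the time-varying case (its Corollary only does so, more crudely, under $\rho(A)<1$), so your argument actually closes a small gap in the stated result.
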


\begin{proof} Note that $\hat{x}[k]=\mathbb{E}\{\hat{x}^{\mathrm{appx}}|(y[t])_{t=0}^k\}$. The Chebyshev's inequality~\cite[p.\,446-451]{laha1979probability} gives
\begin{align*}
\mathbb{P}&\left\{\left\|\hat{x}^{\mathrm{appx}}[k]-\hat{x}[k]\right\|_2\geq \epsilon\right\}\leq\mathbb{E}\left\{\frac{\|\hat{x}^{\mathrm{appx}}[k]-\hat{x}[k]\|_2^2}{\epsilon^2}\right\}.
\end{align*}
For all $i\neq j$, because of the statistical independence of $(T^i[t])_{t=0}^k$ and $(T^j[t])_{t=0}^k$, it can be deduced that
\begin{align*}
&\mathbb{E}\left\{\left(\mathbb{E}\{x[k]|(y[t])_{t=0}^k,(T^i[t])_{t=0}^k\}\hspace{-.03in}-\hspace{-.03in}\mathbb{E}\{x[k]|(y[t])_{t=0}^k\}\right)\right.\\
&\times\hspace{-.05in}\left.\left(\mathbb{E}\{x[k]|(y[t])_{t=0}^k,(T^j[t])_{t=0}^k\}\hspace{-.03in}-\hspace{-.03in}\mathbb{E}\{x[k]|(y[t])_{t=0}^k\}\right)^\top\right\}\hspace{-.04in}=\hspace{-.04in}0,
\end{align*}
and, as a result,
\begin{align*}
&\mathbb{E}\left\{\|\hat{x}^{\mathrm{appx}}[k]-\hat{x}[k]\|_2^2|(y[t])_{t=0}^k\right\}\\
&=\trace\left(\mathbb{E}\left\{(\hat{x}^{\mathrm{appx}}[k]-\hat{x}[k])(\hat{x}^{\mathrm{appx}}[k]-\hat{x}[k])^\top|(y[t])_{t=0}^k\right\}\right)\\
&=\frac{1}{I}\trace\left(\mathbb{E}\left\{(\mathbb{E}\{x[k]|(y[t])_{t=0}^k,(T[t])_{t=0}^k\}-\hat{x}[k])\right.\right.\\
&\hspace{.3in}\left.\left.\times(\mathbb{E}\{x[k]|(y[t])_{t=0}^k,(T[t])_{t=0}^k\}-\hat{x}[k])^\top|(y[t])_{t=0}^k\right\}\right)\\
&=\frac{1}{I}\mathbb{E}\left\{\|\mathbb{E}\{x[k]|(y[t])_{t=0}^k,(T[t])_{t=0}^k\}-\hat{x}[k]\|_2^2|(y[t])_{t=0}^k\right\}.
\end{align*}
This gives that
\begin{align*}
&\mathbb{E}\{\|\hat{x}^{\mathrm{appx}}[k]-\hat{x}[k]\|_2^2\}\\
&=
\mathbb{E}\{\mathbb{E}\{\|\hat{x}^{\mathrm{appx}}[k]-\hat{x}[k]\|_2^2|(y[t])_{t=0}^k\}\}\\
&=\frac{1}{I}\mathbb{E}\{\mathbb{E}\{\|\mathbb{E}\{x|(y[t])_{t=0}^k,(T[t])_{t=0}^k\}-\hat{x}[k]\|_2^2|(y[t])_{t=0}^k\}\}\\
&=\frac{1}{I}\mathbb{E}\{\|\mathbb{E}\{x|(y[t])_{t=0}^k,(T[t])_{t=0}^k\}-\hat{x}[k]\|_2^2\}.
\end{align*}
Let $M[k]:=\mathbb{E}\left\{\|\mathbb{E}\{x|(y[t])_{t=0}^k,(T[t])_{t=0}^k\}\hspace{-.03in}-\hspace{-.03in}\hat{x}[k]\|_2^2\right\}$. Therefore, it can be proved that
\begin{align*}
\mathbb{P}\left\{\left\|\hat{x}^{\mathrm{appx}}[k]-\hat{x}[k]\right\|_2\leq \epsilon\right\}\geq 1-\frac{M[k]}{I\epsilon^2}.
\end{align*}
This concludes the proof.
\end{proof}

Note that the memory required for implementing the filter might grow with time. This is because if $M[k]$ grows the with time, more and more Kalman filters are required to maintain the probability $\mathbb{P}\left\{\left\|\hat{x}^{\mathrm{appx}}[k]-\hat{x}[k]\right\|_2\leq \epsilon\right\}$ the same as in the previous time step. To add a new Kalman filter, all the measurements from the past $(y[t])_{t=0}^k$ are required. Thus, the complexity of implementing the filter might grow with time as the estimator might need more Kalman filters in each iteration. The following theorem shows that, for a family of systems, $M[k]$ has a constant upper bound and, thus, it is not required to increase the number of the Kalman filters with time.

\begin{corollary} If $\rho(A)<1$, there exists a finite $M>0$ such that $\mathbb{P}\left\{\left\|\hat{x}[k]-\hat{x}^{\mathrm{appx}}[k]\right\|_2\leq \epsilon\right\}\geq 1-\delta$ for $I\geq M\delta^{-1}\epsilon^{-2}$.
\end{corollary}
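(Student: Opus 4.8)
The plan is to reduce everything to Theorem~\ref{tho:3}. That proof already establishes
$\mathbb{P}\{\|\hat{x}[k]-\hat{x}^{\mathrm{appx}}[k]\|_2\leq\epsilon\}\geq 1-M[k]/(I\epsilon^2)$ with
$M[k]=\mathbb{E}\{\|\mathbb{E}\{x[k]|(y[t])_{t=0}^k,(T[t])_{t=0}^k\}-\hat{x}[k]\|_2^2\}$, so it suffices to produce one finite constant $M$ with $M[k]\leq M$ for all $k\in\mathbb{N}_0$; then any $I\geq M\delta^{-1}\epsilon^{-2}\geq M[k]\delta^{-1}\epsilon^{-2}$ gives $1-M[k]/(I\epsilon^2)\geq 1-\delta$. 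Thus the entire task is a uniform-in-time bound on $M[k]$, and the non-trivial dependence of $M[k]$ on the model is what the hypothesis $\rho(A)<1$ must control.

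First I would throw away the $T$-dependent Kalman recursions and bound $M[k]$ by a prior quantity. Write $Z[k]:=\mathbb{E}\{x[k]|(y[t])_{t=0}^k,(T[t])_{t=0}^k\}$, so $\hat{x}[k]=\mathbb{E}\{Z[k]|(y[t])_{t=0}^k\}$. Conditioning on $((y[t])_{t=0}^k,(T[t])_{t=0}^k)$, the vector $x[k]-Z[k]$ has zero conditional mean whereas $Z[k]-\hat{x}[k]$ is a function of those conditioning variables, so the cross term vanishes and
\[
\mathbb{E}\{\|x[k]-\hat{x}[k]\|_2^2\}=\mathbb{E}\{\|x[k]-Z[k]\|_2^2\}+M[k].
\]
Hence $M[k]\leq\mathbb{E}\{\|x[k]-\hat{x}[k]\|_2^2\}$, and since $\hat{x}[k]=\mathbb{E}\{x[k]|(y[t])_{t=0}^k\}$ is the minimum-mean-square estimator it beats the trivial estimator $0$, giving $M[k]\leq\mathbb{E}\{\|x[k]\|_2^2\}=\trace(\mathbb{E}\{x[k]x[k]^\top\})$. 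The virtue of this chain is that it never touches $\tilde P[k]$ or $\hat P[k]$, which are themselves random through $(T[t])_{t=0}^k$ and need not be uniformly bounded.

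It then remains to bound $P[k]:=\mathbb{E}\{x[k]x[k]^\top\}$ uniformly in $k$. With $A[k]\equiv A$ (as the notation $\rho(A)$ indicates) and $w[k]$ zero-mean and independent of $x[k]$, the dynamics give $P[k+1]=AP[k]A^\top+W[k]$, hence $P[k]=A^kX_0(A^\top)^k+\sum_{j=0}^{k-1}A^jW[k-1-j](A^\top)^j$. Since $\rho(A)<1$ there exist $c\geq 1$ and $\rho\in(\rho(A),1)$ with $\|A^j\|\leq c\rho^j$; assuming the process-noise covariances are uniformly bounded, $W[k]\preceq\bar{W}$ for all $k$, each summand obeys $\trace(A^jW[k-1-j](A^\top)^j)\leq c^2\rho^{2j}\trace(\bar{W})$, so $\trace(P[k])\leq c^2\trace(X_0)+c^2\trace(\bar{W})/(1-\rho^2)=:M<\infty$ for all $k$. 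Substituting into Theorem~\ref{tho:3} finishes the proof.

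The main obstacle is really a matter of stating the hypotheses completely: $\rho(A)<1$ alone does not bound $M[k]$ unless $A$ is time-invariant and the $W[k]$ are uniformly bounded, so I would add $W[k]\preceq\bar{W}$ (and time-invariant $A$) as standing assumptions or include them in the corollary; everything else is the routine geometric-series estimate above. The only part that looks delicate, namely that the randomized Kalman gains and error covariances depend on $(T[t])$ and are not uniformly controlled, is avoided altogether by the bound $M[k]\leq\trace(\mathbb{E}\{x[k]x[k]^\top\})$, which refers only to the prior state covariance.
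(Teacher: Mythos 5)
Your proposal is correct, and it shares the overall architecture of the paper's proof: reduce to Theorem~\ref{tho:3}, bound $M[k]$ uniformly in $k$ by the prior second moment $\mathbb{E}\{\|x[k]\|_2^2\}$, and invoke stability of $A$ to bound that quantity. Where you diverge is in the key intermediate step. The paper expands $\mathbb{E}\{\|Z[k]-\hat{x}[k]\|_2^2\}$ via $\mathbb{E}\{\|Z[k]\|_2^2\}+\mathbb{E}\{\|\hat{x}[k]\|_2^2\}$ (valid here because $\mathbb{E}\{Z[k]^\top\hat{x}[k]\}=\mathbb{E}\{\|\hat{x}[k]\|_2^2\}\geq 0$) and then applies the conditional-Jensen contraction to each term, arriving at $M[k]\leq 2\,\mathbb{E}\{\|x[k]\|_2^2\}$. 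You instead use the orthogonality (Pythagorean) decomposition $\mathbb{E}\{\|x[k]-\hat{x}[k]\|_2^2\}=\mathbb{E}\{\|x[k]-Z[k]\|_2^2\}+M[k]$ together with the suboptimality of the zero estimator, which yields $M[k]\leq\mathbb{E}\{\|x[k]\|_2^2\}$ --- the same conclusion with a constant that is a factor of $2$ tighter. You are also more careful than the paper on two points worth keeping: (i) you spell out the implicit hypotheses (time-invariant $A$, or at least uniformly bounded $W[k]$) needed for the Lyapunov recursion $P[k+1]=AP[k]A^\top+W[k]$ to stay bounded, whereas the paper simply asserts boundedness via a limit written as $\lim_{T\rightarrow\infty}\mathbb{E}\{\|x[k]\|_2^2\}$ (in which $T$ does not even appear); and (ii) you give the explicit geometric-series estimate. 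Both proofs are valid; yours is marginally sharper and more self-contained.
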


\begin{proof}
Notice that
\begin{align*}
\mathbb{E}\{\|\mathbb{E}&\{x[k]|(y[t])_{t=0}^k,(T[t])_{t=0}^k\}\hspace{-.03in}-\hspace{-.03in}\hat{x}[k]\|_2^2\}\\
\leq &\mathbb{E}\left\{\|\mathbb{E}\{x[k]|(y[t])_{t=0}^k,(T[t])_{t=0}^k\}\|_2^2\right\}
+
\mathbb{E}\left\{\|\hat{x}[k]\|_2^2\right\}\\
= &\mathbb{E}\left\{\|\mathbb{E}\{x[k]|(y[t])_{t=0}^k,(T[t])_{t=0}^k\}\|_2^2\right\}\\
&+
\mathbb{E}\left\{\|\mathbb{E}\{x[k]\,|\,(y[t])_{t=0}^k\}\|_2^2\right\}\\
\leq &2\mathbb{E}\left\{\|x[k]\|_2^2\right\}\\
\leq &2\lim_{T\rightarrow\infty}\mathbb{E}\left\{\|x[k]\|_2^2\right\}.
\end{align*}
Selecting $M=2\lim_{T\rightarrow\infty}\mathbb{E}\left\{\|x[k]\|_2^2\right\}$ (which is bounded due to the fact that $A$ is stable) concludes the proof.
\end{proof}

\subsection{Generating Samples}
The Bayes' rule implies that
\begin{align*}
p(T[k]|(y[t])_{t=0}^k)
\hspace{-.03in}\propto &p(y[k]|T[k],(y[t])_{t=0}^{k-1}) p(T[k]|(y[t])_{t=0}^{k-1})\\
\propto &p(y[k]|T[k],(y[t])_{t=0}^{k-1})\\
&\times p((y[t])_{t=0}^{k-1}|T[k])p(T[k])\\
= &p(y[k]|T[k],(y[t])_{t=0}^{k-1})\\
&\times p((y[t])_{t=0}^{k-1})p(T[k])\\
\propto &p(y[k]|T[k],(y[t])_{t=0}^{k-1})p(T[k]),
\end{align*}
where $\propto$ is the proportionality operator 
(i.e., it denotes the fact that an appropriate factor should be multiplied by the right-hand side to ensure that $p(T[k]|(y[t])_{t=0}^k)$ integrates to one for each $(y[t])_{t=0}^k$). 
It can be seen that computing $p(T[k]|(y[t])_{t=0}^k)$ requires the knowledge of $p(y[k]|T[k],(y[t])_{t=0}^{k-1})$, which can only be calculated by knowing $p(x[k]|(y[t])_{t=0}^{k-1})$. Noting that computing $p(x[k]|(y[t])_{t=0}^{k-1})$ is more difficult than solving Problem~\ref{prob:1} (i.e., it makes the sampling of $T[k]$ as cumbersome as the MAP filter in Section~\ref{sec:MLE} to implement), a number of heuristics are proposed in what follows to generate samples that, at least, closely follow this distribution\footnote{Note that for deadbeat systems (i.e., $A[k]=A$ such that $A^L=0$ for some $L\in\mathbb{N}$), $p(x[k]|(y[t])_{t=0}^{k-1})$ is equal to $p(x[k]|(y[t])_{t=k-L}^{k-1})$ and, thus, a rolling window estimation can be utilized. However, for general systems, such treatments provide nice heuristics.}. 

\subsubsection{Memory-Less Generation} \label{subsec:memory-less}
This method relies on the heuristic that if $\rho(A)\ll 1$, the state of the systems is loosely correlated in time and, thus,
\begin{align*}
p(T[k]|(y[t])_{t=0}^k)\approx p(T[k]|y[k]).
\end{align*}
Now, for each $\tau_i[k]$ from $T[k]=\diag(\tau_1[k]^2,\dots,\tau_p[k]^2)$, it can be observed that
\begin{align*}
p(\tau_i[k]|y[k])
\propto & p(y[k]|\tau_i[k])p(\tau_i[k])
\propto p(y_i[k]|\tau_i[k])p(\tau_i[k]).
\end{align*}
This density function can be easily calculated noting that the state stays Gaussian in the absence of any measurements. That is, $x[k]\sim \mathcal{N}(0,X[k])$, where
\begin{align*}
X[k]=A[k]X[k]A[k]^\top + W[k], \quad X[0]=X_0.
\end{align*}
Thus,
\begin{align}
p(\tau_i[k]|y[k])\nonumber
\propto & p(y_i[k]|\tau_i[k])p(\tau_i[k])\\
\propto & \frac{1}{C_i[k]X[k]C_i[k]^\top +\tau_i[k]^2}\nonumber\\
&\times \exp\bigg(-\frac{y_i[k]^2}{2(C_i[k]X[k]C_i[k]^\top +\tau_i[k]^2)} \bigg)\nonumber\\
&\times \frac{2\tau_i[k]}{V_{ii}[k]}\exp\bigg(
-\frac{\tau_i[k]^2}{V_{ii}[k]} \bigg),
\label{eqn:long_f}
\end{align}
where $C_i[k]$ is the $i$-th row of the matrix $C[k]$. Rejection sampling~\cite{casella} can be used to generate samples that follow this density function. Let $f(\tau_i[k],y[k])$ denote the right-hand side of~\eqref{eqn:long_f}. First, generate a Rayleigh variable $\tau_i[k]\sim\mathcal{R}(\sqrt{V_{ii}[k]/2})$ and a uniform random variable $u$ between zero and one. The random variable $\tau_i[k]$ follows the density function $p(\tau_i[k]|y[k])$ if $u\leq f(\tau_i[k],y[k])/[2\tau_i[k]/V_{ii}[k]\exp(-\tau_i[k]^2/V_{ii}[k])]/\varpi$, where
\begin{align*}
\varpi
:=&\max_{\tau} \frac{1}{C_i[k]X[k]C_i[k]^\top +\tau^2}\\&\hspace{.3in}\times\exp\bigg(\frac{-y_i[k]^2}{2(C_i[k]X[k]C_i[k]^\top +\tau^2)} \bigg).
\end{align*}
Otherwise, repeat this procedure. 

\subsubsection{Aggregation over Time}
In this case, a whole batch of $T:=(T[t])_{t=0}^k$ is generated in each time step for the observed measurements $y:=(y[t])_{t=0}^k$. Define $x:=(x[t])_{t=0}^k$. Similarly, the Bayes' rule dictates that
\begin{align*}
p(T|y)
\propto  p(y|T)p(T)
= & \bigg[\int p(y|x,T)p(x|T)\mathrm{d}x\bigg] p(T)\\
= & \bigg[\int p(y|x,T)p(x)\mathrm{d}x\bigg] p(T).
\end{align*}
With the same procedure as in the previous case, one can generate samples that follow the density function  $p(T|y)$. However, the size of the problem grows unbound, which makes this approach infeasible  in practice. To be able to overcome the computational complexity and memory requirements, a moving horizon can be used while assuming that the output measurements outside of the window are loosely correlated with those  inside. 

\subsubsection{Gaussian Approximation}
If the memory-less generation and the aggregate approach are not suitable, the density function $p(x[k]|(y[t])_{t=0}^{k-1})$ can be approximated by a Gaussian using the optimal linear estimators that are constructed in Section~\ref{sec:linear}. In this case, it can be assumed that
\begin{align*}
p(x[k]|(y[t])_{t=0}^{k-1})\hspace{-.04in}\propto\hspace{-.04in} \exp\hspace{-.04in}\bigg(\hspace{-.06in}-\hspace{-.02in}\frac{1}{2}(x-\mu[k])^\top \hat{X}[k]^{-1}(x-\mu[k])\hspace{-.03in}\bigg)\hspace{-.02in}.
\end{align*} 
Now, it can be proved that
\begin{align*}
p(\tau_i[k]|(y[t])_{t=0}^k)
\propto &p(y[k]|\tau_i[k],(y[t])_{t=0}^{k-1})p(\tau_i[k])\\
\propto &p(y_i[k]|\tau_i[k],(y[t])_{t=0}^{k-1})p(\tau_i[k])\\
\propto &\frac{1}{C_i[k]\hat{X}[k]C_i[k]^\top +\tau_i[k]^2}\nonumber\\
&\times\hspace{-.03in}\exp\hspace{-.04in}\bigg(\hspace{-.05in}-\hspace{-.02in}\frac{y_i[k]^2}{2(C_i[k]\hat{X}[k]C_i[k]^\top\hspace{-.03in}+\hspace{-.03in}\tau_i[k]^2)}\hspace{-.03in} \bigg)\\
&\times \frac{2\tau_i[k]}{V_{ii}[k]}\exp\bigg(
-\frac{\tau_i[k]^2}{V_{ii}[k]} \bigg).
\end{align*}
Similarly, the rejection sampling can be utilized to generate samples that follow a density of this form.

\section{Optimal Linear Estimators} \label{sec:linear}
In this section, optimal linear estimators are investigated. This filter is only presented in the paper to assist with their comparison with the estimator developed using the randomized algorithm. 
It is well-known that the Kalman filter is the optimal linear estimator (i.e., it achieves the least variance among all the linear estimators); see, for example,~\cite[p.\,49]{krishnamurthy2016partially} and~\cite[p.\,10]{maybeck1982stochastic}. Therefore, the optimal linear estimator is 
\begin{align}
\hat{x}[k]=
&A[k-1]\hat{x}[k-1]\nonumber\\
&+L[k](y[k]\hspace{-.03in}-\hspace{-.03in}C[k]A[k-1]\hat{x}[k-1]), \, \hat{x}[0]=0,
\end{align}
where 
\begin{align*}
L[k]=&\bar{P}[k]C[k]^\top(V[k]+C[k]\bar{P}[k]C[k]^\top)^{-1},\\
\bar{P}[k]=&A[k]\check{P}[k-1]A[k]^\top+W[k],\\
\check{P}[k]
=&\bar{P}[k]-L[k]C[k]\bar{P}[k]-\bar{P}[k]C[k]^\top L[k]^\top\\
&+L[k](C[k]\bar{P}[k]C[k]^\top +V[k])L[k]^\top.
\end{align*}
It is worth noting that $\check{P}[k]=\mathbb{E}\{(x[k]-\hat{x}[k])(x[k]-\hat{x}[k])^\top\}$.
For constant model parameters $A[k]=A$, $B[k]=B$, $C[k]=C$, $W[k]=W$, and $V[k]=V$, the optimal linear estimator has a bounded variance if $(A,C)$ is observable. 

\section{MAP Estimator} \label{sec:MLE}
This section follows the same approach as in~\cite{carrillo2015state} to construct the maximum \textit{a posteriori} estimate of the state. To do so, for a given $k\in\mathbb{N}$, define
\begin{align*}
x=\begin{bmatrix}
 x[1] \\ \vdots \\ x[k]
\end{bmatrix}\in\mathbb{R}^{nk},\hspace{.1in} 
w=\begin{bmatrix}
 w[0] \\ \vdots \\ w[k-1]
\end{bmatrix}\in\mathbb{R}^{nk}.
\end{align*}
It can be seen that
\begin{align*}
x=\Omega x_0+\Psi w,
\end{align*}
where
\begin{align*}
\Omega\hspace{-.03in}=\hspace{-.05in}\begin{bmatrix}
A[0] \\ \vdots \\ A[k-1]\cdots A[0]
\end{bmatrix},
\Psi\hspace{-.03in}=\hspace{-.05in}\begin{bmatrix}
I & \cdots & 0 \\ \vdots & \ddots & \vdots \\ A[k-1]\cdots A[1] & \cdots & I
\end{bmatrix}\hspace{-.03in}.
\end{align*}
Hence, $x\sim \mathcal{N}(0,\Omega X_0 \Omega^\top+\Psi W\Psi^\top)$, where $W=\diag(W[0],\dots,W[k-1]).$
Further, define
\begin{align*}
y=\begin{bmatrix}
 y[0] \\ \vdots \\ y[k]
\end{bmatrix}\in\mathbb{R}^{p(k+1)},\hspace{.1in} 
v=\begin{bmatrix}
 v[0] \\ \vdots \\ v[k]
\end{bmatrix}\in\mathbb{R}^{p(k+1)}.
\end{align*}
Note that
\begin{align*}
y=Cx+v,
\end{align*}
where $C=\diag(C[0],\dots,C[k-1]).$ Note that $v_i\sim \mathcal{L}(0,\sqrt{V_{ii}/2})$, where $V=\diag(V[0],\dots,V[k]).$
Therefore, the conditional density of $y$ given $x$ is given by
\begin{align*}
p(y|x)
=&\prod_{i=1}^{p(k+1)}\frac{1}{\sqrt{2V_{ii}}}\exp\left(-\frac{\sqrt{2}|y_i-(Cx)_i|}{\sqrt{V_{ii}}}\right)
\\
=&\bigg(\prod_{i=1}^{p(k+1)}\frac{1}{\sqrt{2V_{ii}}}\bigg)\exp\left(-\sqrt{2}\sum_{i=1}^{p(k+1)}\frac{|y_i-(Cx)_i|}{\sqrt{V_{ii}}}\right).
\end{align*}
Now, the Bayes' rule~\cite{durrett2010probability} can be used to find the conditional density function of $x$ given $y$ as
\begin{align*}
p(x|y)
\propto 
&p(y|x)\bigg[\frac{1}{\sqrt{(2\pi)^{nk}\det(X)}}\exp\bigg(-\frac{1}{2}x^\top X^{-1}x \bigg) \bigg]\\
=&\frac{1}{\sqrt{\prod_{i=1}^{p(k+1)}(2V_{ii})}\sqrt{(2\pi)^{nk}\det(X)}}\\
&\times\exp\bigg(-\frac{1}{2}x^\top X^{-1}x-\sqrt{2}\sum_{i=1}^{p(k+1)}\frac{|y_i-(Cx)_i|}{\sqrt{V_{ii}}} \bigg)
\end{align*}
where $X=\Omega X_0 \Omega^\top+\Psi W\Psi^\top$.
Therefore, the MAP estimate of the state at time $k$ is given by
\begin{align*}
\hat{x}[k]=
\begin{bmatrix}
0_{n\times (k-1)n} & I_n
\end{bmatrix}x^{(k)*},
\end{align*}
where
\begin{align*}
x^{(k)*}\in\argmin_{x\in\mathbb{R}^{nk}}
\|\Phi_1 x\|_2^2+\|\Phi_1(y-Cx)\|_1,
\end{align*}
with $\Phi_1=(1/\sqrt{2})X^{-1/2}$ and $\Phi_2=\sqrt{2}\diag(V_{11}^{-1/2},\dots,V_{pp}^{-1/2})$. The memory required for this filter also grows with $k$ because one needs to remember the whole measurement history. For practical purposes, the filter can be implemented on a rolling window in which case the prior on the  state at the beginning of the rolling window can be approximated with a Gaussian distribution. 

\subsection{Scalar Measurements and Window of Length One}
This subsection deals with the relatively simple, yet insightful, case of a window of length one for scalar measurements. In this case, the filter can be explicitly calculated. Assume that the distribution of $x[k]$ given the measurements, i.e., $p(x[k]|(y[t])_{t=0}^{k})$, can be approximated by a Gaussian distribution with mean $\mu[k]$ and variance $\Xi[k]$. Therefore, the prior on $x[k+1]$ is best approximated by a Gaussian distribution with mean $\mu'[k+1]:=A[k]\mu[k]$ and the variance $\Xi'[k+1]:=A[k]^\top \Xi[k]A[k]+W[k]$. Similarly, it can be proved that
\begin{align*}
p(x&[k+1]|(y[t])_{t=0}^{k+1})
\\ \propto & p(y[k+1]|x[k+1])
 p(x[k+1]|(y[t])_{t=0}^{k})\\
\approx & \exp\bigg(\hspace{-0.06in}-\frac{1}{2}\|\Xi'[k+1]^{-1/2}(x[k+1]-\mu'[k+1])^\top\|_2^2 \\
& -\sqrt{\frac{2}{V[k+1]}}\|C[k+1]x[k+1]-y[k+1]\|_1 \bigg).
\end{align*}
Therefore, the windowed MAP estimate at $k$ is given by
\begin{align} \label{eqn:W1MAP}
\hat{x}[k+1]\in \argmin_{x\in\mathbb{R}^n} \bigg(-\frac{1}{2}&\|\Xi'[k+1]^{-1/2}(x-\mu'[k+1])\|_2^2
\nonumber\\ &\hspace{-1.15in}-\sqrt{\frac{2}{V[k+1]}}\|C[k+1]x[k+1]-y[k+1]\|_1\bigg).
\end{align}
Setting the derivative of the cost function of~\eqref{eqn:W1MAP} with respect to $x$ equal to zero, for $C[k+1]x>y[k+1]$, gives that
\begin{align*}
x=\mu'[k+1]-\sqrt{\frac{2}{V[k+1]}}\Xi'[k+1]C[k+1]^\top.
\end{align*}
However, the same calculations for $C[k+1]x<y[k+1]$ results in
\begin{align*}
x=\mu'[k+1]+\sqrt{\frac{2}{V[k+1]}}\Xi'[k+1]C[k+1]^\top.
\end{align*}
For the rest of the cases, it can be deduced that $C[k+1]x=y[k+1]$. Therefore, 
$x=(C[k+1]^\top C[k+1])^{-1}C[k+1]^\top y[k+1]+N[k+1]\alpha,$
where the columns of the matrix $N[k+1]$ form an orthonormal basis for the null space of $C[k+1]$ and $\alpha\in\mathbb{R}^{n-1}$ is an arbitrary vector. In this case, substituting $x$ into~\eqref{eqn:W1MAP} gives
\begin{align*}
\alpha^*\hspace{-.03in}\in\hspace{-.03in} \argmin_{\alpha\in\mathbb{R}^{n-1}} -\frac{1}{2}&\|\Xi'[k\hspace{-.03in}+\hspace{-.03in}1]^{-1/2}(x'\hspace{-.03in}+\hspace{-.03in}N[k\hspace{-.03in}+\hspace{-.03in}1]\alpha\hspace{-.03in}-\hspace{-.03in}\mu'[k\hspace{-.03in}+\hspace{-.03in}1])^\top\hspace{-.03in}\|_2^2,
\end{align*}
where $x'=(C[k+1]^\top C[k+1])^{-1}C[k+1]^\top y[k+1]$.
By setting the derivative of the cost function of this optimization problem with respect to $\alpha$ equal to zero, it can be shown that $x'+N[k+1]\alpha^*-\mu'[k+1]=0,$
and, as a result,
$\alpha^*=N[k+1]^\top \mu[k+1]'.$
Therefore, the windowed MAP estimate at $k$ is given by~\eqref{eqn:windowed_MAP}, where
\begin{align*}
\zeta[k]=\sqrt{2/V[k+1]}(A[k]^\top \Xi[k]A[k]+W[k])C[k+1]^\top.
\end{align*}
\begin{figure*}
\begin{align}
\hat{x}[k+1]=
\begin{cases}
A[k]\mu[k]-\zeta[k], & \hspace{-2.3in} y[k+1]<C[k+1]A[k]\mu[k]-C[k+1]\zeta[k],\\
N[k+1]^\top A[k]\mu[k]+(C[k+1]^\top C[k+1])^{-1}C[k+1]^\top y[k+1], &\\
& \hspace{-2.3in} C[k+1]A[k]\mu[k]-C[k+1]\zeta[k]\leq y[k+1]\leq C[k+1]A[k]\mu[k]+C[k+1]\zeta[k],\\
A[k]\mu[k]+\zeta[k], & \hspace{-2.3in}C[k+1]A[k]\mu[k]-C[k+1]\zeta[k]<y[k+1].
\end{cases} \label{eqn:windowed_MAP}
\end{align}
\hrulefill
\vspace{-.2in}
\end{figure*}
Using the closed-form expression in~\eqref{eqn:windowed_MAP}, $\mu[k+1]=\mathbb{E}\{x[k+1]\}$ and $\Xi[k+1]=\mathbb{E}\{(x[k+1]-\mu[k+1])(x[k+1]-\mu[k+1])^\top\}$ can be calculated. Note that, although requiring a finite memory as in the case of the Kalman filter, the updates of the filter are nonlinear.

\section{Numerical Example} \label{sec:numerical}
As a numerical example, consider the discrete-time linear time-invariant dynamical system in
\begin{align*} 
x[k+1]&=\begin{bmatrix}
0.9 & 1.0 \\ 0.0 & 0.8
\end{bmatrix}x[k]+w[k],\,x[0]=0,\\
y[k]&=\begin{bmatrix}
1.0 & 0.0
\end{bmatrix}
x[k]+v[k],
\end{align*}
with $W=\diag(1.0,1.5)$ and $V=10$. Note that the pair $(A,C)$ is observable. The previously mentioned methods can be utilized to construct an estimate of the state.

Fig.~\ref{fig:1} illustrates the estimation error  $\mathbb{E}\{\|\hat{x}[k]-x[k]\|_2^2\}$ as a function of time $k$ for the some of the developed estimators. Firstly, the black curve in Fig.~\ref{fig:1} shows the error of the optimal linear estimator, which is equal to $\trace(\check{P}[k])$. Now, note that
\begin{align}
&\mathbb{E}\{\|\hat{x}^{\mathrm{appx}}[k]-x[k]\|_2^2\}\nonumber\\
&=\mathbb{E}\bigg\{\bigg\|\frac{1}{I}\sum_{i=1}^I \mathbb{E}\{x[k]|(y[t])_{t=0}^k,(T^i[t])_{t=0}^k\}-x[k]\bigg\|_2^2\bigg\}\nonumber\\
&= \mathbb{E}\bigg\{\frac{1}{I}\sum_{i=1}^I \|\mathbb{E}\{x[k]|(y[t])_{t=0}^k,(T^i[t])_{t=0}^k\}-x[k]\|_2^2\bigg\}\label{eqn:proof:ident}\\
&= \mathbb{E}\bigg\{\frac{1}{I}\sum_{i=1}^I \mathbb{E}\{\|\mathbb{E}\{x[k]|(y[t])_{t=0}^k,(T^i[t])_{t=0}^k\}\nonumber\\
&\hspace{1.7in}-x[k]\|_2^2|(T^i[t])_{t=0}^k\}\bigg\},\nonumber
\end{align}
where the equality in~\eqref{eqn:proof:ident} follows from that the zero-mean random variables $\mathbb{E}\{x[k]|(y[t])_{t=0}^k,(T^i[t])_{t=0}^k\}-x[k]$ and $\mathbb{E}\{x[k]|(y[t])_{t=0}^k,(T^j[t])_{t=0}^k\}-x[k]$ are independent  if $i\neq j$. Let $\tilde{P}^i[k]$ denote the covariance of the error corresponding to the Kalman filter with $(T^i[t])_{t=0}^k$. Thus,
\begin{align}
\mathbb{E}\{\|\hat{x}^{\mathrm{appx}}[k]-x[k]\|_2^2\}&=\mathbb{E}\bigg\{\frac{1}{I}\sum_{i=1}^I \trace(\tilde{P}^i[k])\bigg\}.
\label{eqn:righthand}
\end{align}
The solid red curve in Fig.~\ref{fig:1} shows $(1/I)\sum_{i=1}^I \trace(\tilde{P}^i[k])$ for $I=1000$ as an approximation of $\mathbb{E}\{\|\hat{x}^{\mathrm{appx}}[k]-x[k]\|_2^2\}$ when using the memory-less heuristic for generating $T^i[k]$. This estimator is clearly the best in terms of the performance, however, it has a higher computational load. The green curve in Fig.~\ref{fig:1} illustrates a Monte Carlo approximation of $\mathbb{E}\{\|\hat{x}[k]-x[k]\|_2^2\}$ for the MAP estimate with $10000$ scenarios. The MAP estimator clearly performs worse in terms of the variance of the error. However, such a comparison is not entirely fair for the MAP estimator because it is not designed to minimize the variance of the error as opposed to the other filters. Finally, the dotted black curve shows the variance of the error of the particle filter with 1000 particles (for a fair comparison with the optimal least mean square error estimator approximated using the method of Section~\ref{sec:randomized}). The variance is constructed using a Monte Carlo approximation with $10000$ scenarios. As we can see, the particle filter performs slightly worse. This is because the particle filter does not fully utilizes the specific structure of the problem (i.e., the fact that the system is linear and noise is Laplace). This makes the particle filter a versatile tool for a vast majority of problem; however, it makes it slightly more conservative in specific cases. To run the particle filter, we have used the toolbox in~\cite{particlefiltertoolbox}, which is based on~\cite{arulampalam2002tutorial}.

\begin{figure}
\centering
\begin{tikzpicture}
\node[] at (0,0) {\includegraphics[width=1.0\linewidth]{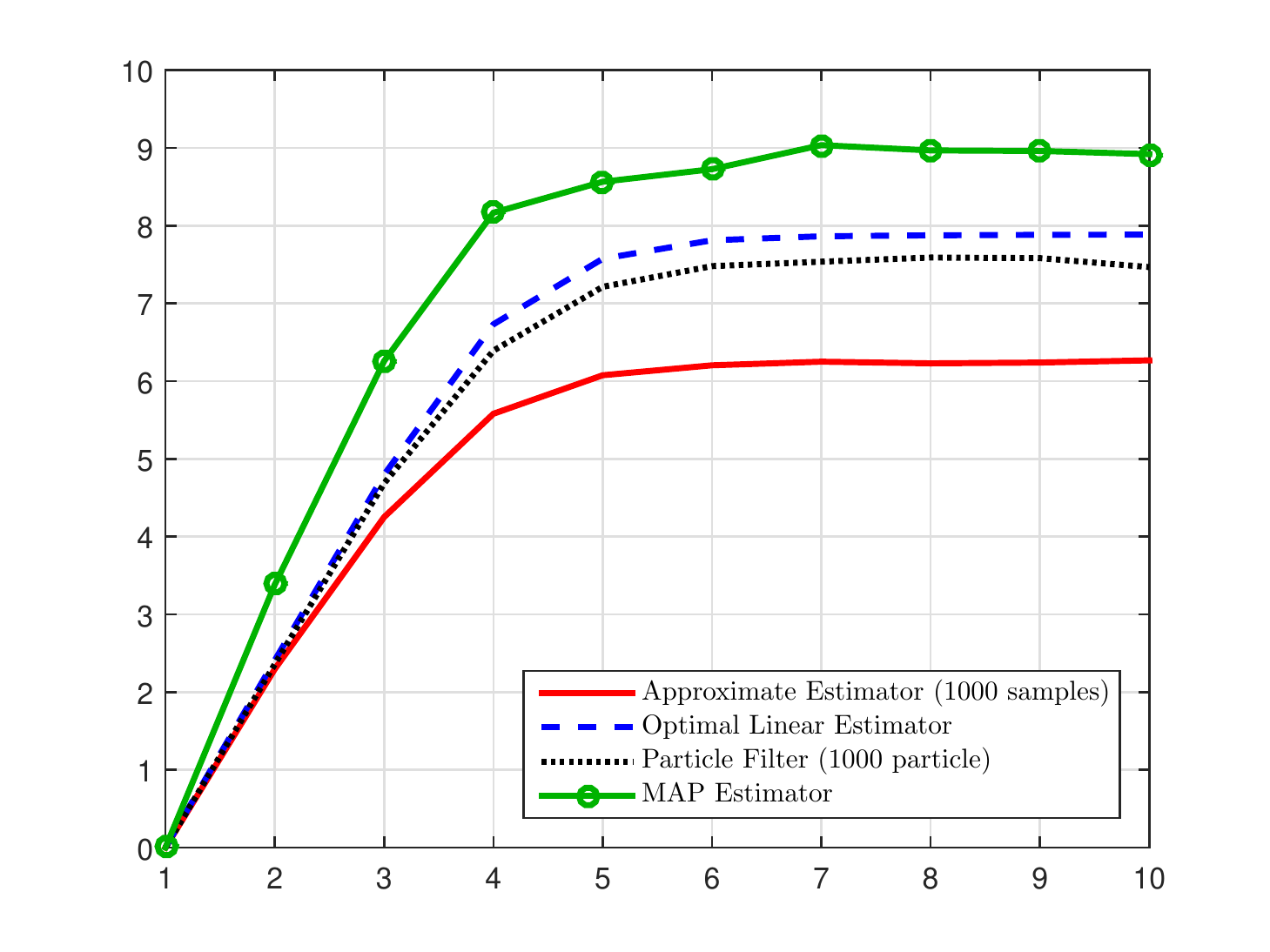}};
\node[] at (0,-3.1) {$k$};
\node[rotate=90] at (-3.8,0) {$\mathbb{E}\{\|\hat{x}[k]-x[k]\|_2^2\}$};
\end{tikzpicture}
\vspace*{-.3in}
\caption{\label{fig:1} The estimation error $\mathbb{E}\{\|\hat{x}[k]-x[k]\|_2^2\}$ as a function of time $k$ for the optimal linear estimator (dashed blue\,\ole), the optimal least mean square error estimator approximated using the method of Section~\ref{sec:randomized} with $I=1000$ (solid red\,\approximation), the particle filter with $1000$ particles (dotted black\,\particle), and the MAP estimator (green\,\map). }
\vspace*{-.2in}
\end{figure}

\section{Conclusions and Future Work} \label{sec:conc}
Optimal state estimation for linear discrete-time systems with measurements corrupted by Laplace noise was considered. A randomized method was used to approximate the optimal least mean square error estimate of the state. This method also works for any other noise that can be decomposed as a multiplication of stochastic variable with an independently drawn Gaussian noise (e.g., Cauchy or normal-product distributed noise). 
Future research can focus on finding a tighter bound on the probability of the event that the error between the approximate state estimate using the randomized method and the optimal least mean square error estimate is small. A lower bound for this probability was constructed here using the Chebyshev's inequality, which does not take into account the special distribution of the state in this problem formulation. Further work can be dedicated to constructing efficient algorithms for sampling in the randomized method.

\bibliographystyle{IEEEtran}
\bibliography{citation}

\begin{thebibliography}{10}
\providecommand{\url}[1]{#1}
\csname url@samestyle\endcsname
\providecommand{\newblock}{\relax}
\providecommand{\bibinfo}[2]{#2}
\providecommand{\BIBentrySTDinterwordspacing}{\spaceskip=0pt\relax}
\providecommand{\BIBentryALTinterwordstretchfactor}{4}
\providecommand{\BIBentryALTinterwordspacing}{\spaceskip=\fontdimen2\font plus
\BIBentryALTinterwordstretchfactor\fontdimen3\font minus
  \fontdimen4\font\relax}
\providecommand{\BIBforeignlanguage}[2]{{%
\expandafter\ifx\csname l@#1\endcsname\relax
\typeout{** WARNING: IEEEtran.bst: No hyphenation pattern has been}%
\typeout{** loaded for the language `#1'. Using the pattern for}%
\typeout{** the default language instead.}%
\else
\language=\csname l@#1\endcsname
\fi
#2}}
\providecommand{\BIBdecl}{\relax}
\BIBdecl

\bibitem{Dwork2008}
C.~Dwork, ``Differential privacy: A survey of results,'' in \emph{Theory and
  Applications of Models of Computation: 5th International Conference, TAMC
  2008, Xi'an, China, April 25-29, 2008. Proceedings}, M.~Agrawal, D.~Du,
  Z.~Duan, and A.~Li, Eds.\hskip 1em plus 0.5em minus 0.4em\relax Berlin,
  Heidelberg: Springer, 2008, pp. 1--19.

\bibitem{dwork2010differential}
C.~Dwork, M.~Naor, T.~Pitassi, and G.~N. Rothblum, ``Differential privacy under
  continual observation,'' in \emph{Proceedings of the 42nd ACM Symposium on
  Theory of Computing}, 2010, pp. 715--724.

\bibitem{le2014differentially}
J.~Le~Ny and G.~J. Pappas, ``Differentially private filtering,'' \emph{IEEE
  Transactions on Automatic Control}, vol.~59, no.~2, pp. 341--354, 2014.

\bibitem{huang2012differentially}
Z.~Huang, S.~Mitra, and G.~Dullerud, ``Differentially private iterative
  synchronous consensus,'' in \emph{Proceedings of the 2012 ACM Workshop on
  Privacy in the Electronic Society}, 2012, pp. 81--90.

\bibitem{huang2014cost}
Z.~Huang, Y.~Wang, S.~Mitra, and G.~E. Dullerud, ``On the cost of differential
  privacy in distributed control systems,'' in \emph{Proceedings of the 3rd
  International Conference on High Confidence Networked Systems}, 2014, pp.
  105--114.

\bibitem{gordon1993novel}
N.~J. Gordon, D.~J. Salmond, and A.~F. Smith, ``Novel approach to
  nonlinear/non-gaussian bayesian state estimation,'' in \emph{Radar and Signal
  Processing, IEE Proceedings F}, vol. 140, no.~2, 1993, pp. 107--113.

\bibitem{arulampalam2002tutorial}
M.~S. Arulampalam, S.~Maskell, N.~Gordon, and T.~Clapp, ``A tutorial on
  particle filters for online nonlinear/non-{Gaussian} {Bayesian} tracking,''
  \emph{IEEE Transactions on Signal Processing}, vol.~50, no.~2, pp. 174--188,
  2002.

\bibitem{tibshirani1996regression}
R.~Tibshirani, ``Regression shrinkage and selection via the lasso,''
  \emph{Journal of the Royal Statistical Society. Series B (Methodological)},
  pp. 267--288, 1996.

\bibitem{carrillo2015state}
L.~R.~G. Carrillo, W.~J. Russell, J.~P. Hespanha, and G.~E. Collins, ``State
  estimation of multiagent systems under impulsive noise and disturbances,''
  \emph{IEEE Transactions on Control Systems Technology}, vol.~23, no.~1, pp.
  13--26, 2015.

\bibitem{7402921}
H.~Sandberg, G.~Dan, and R.~Thobaben, ``Differentially private state estimation
  in distribution networks with smart meters,'' in \emph{Proceedings of the
  54th Annual Conference on Decision and Control}, 2015, pp. 4492--4498.

\bibitem{lebedev1972special}
N.~N. Lebedev, \emph{Special Functions and Their Applications}, ser. Dover
  Books on Mathematics, R.~A. Silverman, Ed.\hskip 1em plus 0.5em minus
  0.4em\relax Dover Publications, 1972.

\bibitem{laha1979probability}
R.~Laha and V.~Rohatgi, \emph{Probability theory}, ser. Wiley Series in
  Probability and Mathematical Statistics.\hskip 1em plus 0.5em minus
  0.4em\relax Wiley, 1979.

\bibitem{casella}
G.~Casella, C.~P. Robert, and M.~T. Wells, ``Generalized accept-reject sampling
  schemes,'' in \emph{A Festschrift for Herman Rubin}, ser. Lecture
  Notes--Monograph Series, A.~DasGupta, Ed.\hskip 1em plus 0.5em minus
  0.4em\relax Beachwood, Ohio, USA: Institute of Mathematical Statistics, 2004,
  vol.~45, pp. 342--347.

\bibitem{krishnamurthy2016partially}
V.~Krishnamurthy, \emph{Partially Observed Markov Decision Processes}.\hskip
  1em plus 0.5em minus 0.4em\relax Cambridge University Press, 2016.

\bibitem{maybeck1982stochastic}
P.~S. Maybeck, \emph{Stochastic Models, Estimation, and Control}.\hskip 1em
  plus 0.5em minus 0.4em\relax Academic Press, 1982, vol.~1.

\bibitem{durrett2010probability}
R.~Durrett, \emph{Probability: Theory and Examples}, ser. Cambridge Series in
  Statistical and Probabilistic Mathematics.\hskip 1em plus 0.5em minus
  0.4em\relax Cambridge University Press, 2010.

\bibitem{particlefiltertoolbox}
D.~A.~A. Mar\'{i}n,
  \url{http://au.mathworks.com/matlabcentral/fileexchange/35468-particle-filter-tutorial}.

\end{thebibliography}

\end{document}